\definecolor{darkblue}{rgb}{0.0,0.0,0.3}
\newtheorem{theorem}{Theorem}[section]
\newtheorem{proposition}[theorem]{Proposition}
\newtheorem{lemma}[theorem]{Lemma}
\theoremstyle{definition}
\newtheorem{claim}[theorem]{Claim}
\theoremstyle{remark}
\newtheorem{remark}[theorem]{Remark}
\numberwithin{equation}{section}
\newcommand{\bR}{\mathbb{R}}
\newcommand{\bH}{\mathbb{H}}
\newcommand{\bS}{\mathbb{S}}
\newcommand{\bn}{\mathbf{n}}
\newcommand{\Ampere}{Amp\`{e}re}
\newcommand{\Garding}{G\r{a}rding}
\DeclareMathOperator{\const}{const}
\DeclareMathOperator{\tr}{tr}
\title[Constant scalar curvature equation in hyperbolic space]{Hypersurfaces of constant scalar curvature in hyperbolic space with prescribed asymptotic boundary at infinity}
\author{Bin Wang}
\address[]{Department of Mathematics, The Chinese University of Hong Kong, Shatin, New Territories, The Hong Kong Special Administrative Region of the People's Republic of China.} 
\email{bwangmath@link.cuhk.edu.hk}
\subjclass[2020]{Primary 53C21; Secondary 35J60, 53C40}
\keywords{The asymptotic Plateau problem, hypersurfaces of constant scalar curvature, fully nonlinear elliptic PDEs, a priori estimates.}
\begin{document}

\begin{abstract}
This article concerns a natural generalization of the classical asymptotic Plateau problem in hyperbolic space. We prove the existence of a smooth complete hypersurface of constant scalar curvature with a prescribed asymptotic boundary at infinity. The desired hypersurface is constructed as the limit of constant scalar curvature graphs (with respect to vertical geodesics) over a fixed compact domain in a horosphere, and the problem is thus reduced to solving a Dirichlet problem for a fully nonlinear elliptic partial differential equation which is degenerate along the boundary. Previously, the result was known only for a restricted range of curvature values. Now, in this article, by introducing some new techniques, we are able to solve the Dirichlet problem for all possible curvature values. The main ingredient is the establishment of the crucial second order a priori estimates for admissible solutions. 

\end{abstract}
\maketitle
\setcounter{tocdepth}{1} 
\tableofcontents

\section{Introduction}
Fix $n \geq 3$. Let $\bH^{n+1}$ denote the hyperbolic space of dimension $n+1$ and let $\partial_{\infty} \bH^{n+1}$ denote its ideal boundary at infinity. The classical asymptotic Plateau problem in hyperbolic space asks whether there exists an area minimizing submanifold $\Sigma \subseteq \bH^{n+1}$ asymptotic to a given submanifold $\Gamma \subseteq \partial_{\infty} \bH^{n+1}$. In a seminal work \cite{Anderson-1}, M.~T.~Anderson solved the problem for absolutely area minimizing varieties for any dimension and codimension; this is one of the most important results in the field and initiates further investigations later on. One motivation is that, if we restrict ourselves to find the desired object in a fixed topological type (say, a disk), then solutions of this problem would yield area minimizing representative of essential surfaces in hyperbolic 3-manifolds. By essential surfaces, we mean $\pi_1$-injective surfaces and they are quite important for understanding the structure of hyperbolic 3-manifolds; see the works of Anderson \cite{Anderson-2} and Gabai \cite{Gabai}, and Uhlenbeck's program \cite{Uhlenbeck} on the moduli spaces of minimal surfaces in hyperbolic $3$-manifolds.

The problem is indeed far-reaching in the sense that several variants of it are also widely applicable in many other fields. For instance, if one looks for entire solutions with prescribed behavior at infinity, then the problem is related to the Van der Waals phase transition model and minimal hypersurfaces; see \cite{PP, Mramor}. On the other hand, from a purely analysis point of view, it is of interest to study Plateau type problems in hyperbolic space, where things change fundamentally due to the geometric nature of $\bH^{n+1}$ having negative sectional curvature; we refer the reader to a work \cite{Lang-2003} of Lang for the problem in Gromov hyperbolic manifolds. For more, the reader should also be referred to the survey \cite{survey} of Coskunuzer.

In this article, we are concerned with a natural generalization of the classical asymptotic Plateau problem in hyperbolic space, which was initiated by Guan and Spruck in \cite{JEMS}. The generalization states the following: 

\begin{displayquote}
Let $f: \bR^n \to \bR$ be a smooth symmetric function of $n$ variables satisfying standard assumptions and let $K$ be an open symmetric convex cone containing the positive cone $K^{+}:=\{\kappa \in \bR^n: \kappa_i>0 \quad \forall\ 1 \leq i \leq n\}$.
Given a disjoint collection of closed embedded smooth $(n-1)$-dimensional submanifolds $\Gamma=\{\Gamma_1,\ldots,\Gamma_m\} \subseteq \partial_{\infty} \bH^{n+1}$ and a constant $0<\sigma<1$, we ask whether there exists a smooth complete hypersurface $\Sigma$ in $\bH^{n+1}$ satisfying 
\begin{equation}
\text{$f(\kappa[\Sigma])=\sigma$ and $\kappa[\Sigma] \in K$ on $\Sigma$}, \label{req1}
\end{equation}
with the asymptotic boundary
\begin{equation}
\partial \Sigma=\Gamma \label{req2}
\end{equation} at infinity; here $\kappa[\Sigma]=(\kappa_1,\ldots,\kappa_n)$ denotes the hyperbolic principal curvatures of $\Sigma$.
\end{displayquote} 

Motivations for considering this generalization were not stated in the work \cite{JEMS} of Guan-Spruck, but from our limited knowledge, besides the ones that are already stated above, it is worth studying for at least two reasons: (1) When restricting to horospherically convex hypersurfaces, solutions to \eqref{req1}-\eqref{req2} can be used to induce complete conformal metrics on subdomains of the sphere $\bS^n$, and thus, it is related to the Yamabe problem and the Min-Oo conjecture; see the correspondence theorem established in \cite{duality} and further developments in \cite{Bonini-Qing-Zhu, Bonini-Ma-Qing, Espinar-CPAM, Espinar-AJM, Espinar-JMPA}. (2) When restricting to locally strictly convex hypersurfaces, it is the dual problem of finding spacelike strictly convex hypersurfaces in the de Sitter space with prescribed future asymptotic boundary, which has some physical applications and we do not address them here but refer the reader to \cite{Spruck-Xiao-1, JDG, Montiel}, and references citing those work.

In their very delicate work \cite{JEMS}, Guan-Spruck proved the existence of $\Sigma$ satisfying \eqref{req1}-\eqref{req2} with the restriction that $\sigma \in (\sigma_0,1)$, where $\sigma_0 \approx 0.37$. Later, Xiao \cite{Xiao-B} improved the bound to $\sigma_0 \approx 0.14$. In a series of joint work \cite{JGA,SGAR,JDG,Xiao-A}, Guan-Spruck-Szapiel-Xiao showed that this existence result could be enhanced to all $\sigma \in (0,1)$ if the cone $K=K^{+}$ coincides with the positive cone. However, the positive cone may not always be the natural cone for consideration. In particular, when $f=H_{k}^{1/k}$ and when $f=(H_k/H_l)^{1/(k-l)}$, where
\[H_{k}(\kappa):=\binom{n}{k}^{-1}S_k(\kappa)=\binom{n}{k}^{-1} \sum_{1 \leq i_1<\cdots<i_k \leq n} \kappa_{i_1}\cdots \kappa_{i_k}\] is the normalized $k$-th elementary symmetric polynomial, it is of desire to have the result for all $\sigma \in (0,1)$ in the $k$-th \Garding\ cone
\[K_k:=\{\kappa \in \bR^n: S_{j}(\kappa)>0 \quad \forall\ 1 \leq j \leq k\}.\] Indeed, these functions do include several notable examples.
\begin{align*}
H_1(\kappa)&=\frac{1}{n}\sum_{i=1}^{n} \kappa_i \quad\quad\quad\quad\quad\quad\quad \text{is the mean curvature},\\
H_2(\kappa)&=\frac{2}{n(n-1)}\sum_{i<j}\kappa_i\kappa_j \quad\quad\quad \text{is the scalar curvature},\\
H_n(\kappa)&=\prod_{i=1}^{n}\kappa_i \quad\quad\quad\quad\quad\quad\quad\quad \text{is the Gauss curvature}, \\
\left(\frac{H_n}{H_{n-1}}\right)(\kappa)&=n\left(\sum_{i=1}^{n} \frac{1}{\kappa_i}\right)^{-1} \quad\quad\quad\ \quad \text{is the harmonic curvature};
\end{align*} other values of $k$ and $l$ also constitute important geometric quantities; see, for example, \cite{Guan-Guan, Guan-Li-Li, Guan-Ma}.

The zero mean curvature case is the one treated in Anderson's seminal work \cite{Anderson-1, Anderson-2}, followed by Hardt-Lin \cite{Hardt-Lin} and Lin \cite{Lin}. It was then extended to the case of constant mean curvature by Guan-Spruck \cite{AJM}, Nelli-Spruck \cite{Nelli-Spruck}, and Tonegawa \cite{Tonegawa}. The Gauss curvature case was raised by Labourie \cite{Labourie} in $\bH^{3}$ for studying the structure of $3$-dimensional hyperbolic manifolds, and was later solved by Rosenberg-Spruck \cite{Rosenberg-Spruck} in $\bH^{n+1}$. The harmonic curvature case is contained in the series of joint work of Guan-Spruck-Szapiel-Xiao \cite{JGA,SGAR,JDG,Xiao-A}. However, the existence result for the scalar curvature case, according to Guan-Spruck \cite{JEMS} and Xiao \cite{Xiao-B}, is only available for a restricted range $\sigma \in (\sigma_0,1)$. Since the scalar curvature case is clearly of great geometric interest, it is in demand to solve \eqref{req1}-\eqref{req2} for all $\sigma \in (0,1)$ when $f=H_{2}^{1/2}$ and $K=K_2$. In this article, we are able to resolve this longstanding problem by introducing new techniques. We state the theorem as follows.
\begin{theorem} \label{k=2}
Given a disjoint collection of closed embedded smooth $(n-1)$-dimensional submanifolds $\Gamma=\{\Gamma_1,\ldots,\Gamma_m\} \subseteq \partial_{\infty} \bH^{n+1}$ and a constant $0<\sigma<1$, if $\Gamma=\partial \Omega$ is mean-convex, then there exists a smooth complete hypersurface $\Sigma$ in $\bH^{n+1}$ satisfying 
\[
\text{$H_{2}^{1/2}(\kappa[\Sigma])=\sigma$ and $\kappa[\Sigma] \in K_2$ on $\Sigma$} 
\]
with the asymptotic boundary
\[
\partial \Sigma=\Gamma 
\] at infinity.
\end{theorem}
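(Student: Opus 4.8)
The plan is to follow the general scheme for asymptotic Plateau problems in $\bH^{n+1}$ — graph reduction, approximation by non-degenerate Dirichlet problems, a priori estimates, and a limiting argument — with the genuinely new input confined to the interior second order estimate. Work in the upper half-space model, so that $\bH^{n+1}=\{(x,x_{n+1})\in\bR^{n+1}:x_{n+1}>0\}$ and $\partial_{\infty}\bH^{n+1}=\bR^{n}\times\{0\}$, and seek $\Sigma$ as the vertical graph of a positive function $u$ over the bounded domain $\Omega$ with $\partial\Omega=\Gamma$, subject to $u\to 0$ on $\partial\Omega$. Expressing the hyperbolic principal curvatures $\kappa[\Sigma]$ through $u$, $Du$, $D^{2}u$ (the usual computation, in which $\kappa[u]$ appears as the eigenvalues of a symmetric matrix $a_{ij}[u]$ built from $D^{2}u$, $Du$ and $1/u$), equation \eqref{req1} becomes a fully nonlinear equation $G[u]:=H_{2}^{1/2}(\kappa[u])=\sigma$, to be solved in the admissible class $\{u:\kappa[u]\in K_{2}\}$; on this class $G$ is elliptic — uniformly so on compact subsets of $\Omega$ but degenerating as $x\to\partial\Omega$ — and, since $H_{2}^{1/2}$ is concave on the \Garding\ cone $K_{2}$, it is also concave. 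The boundary degeneracy is precisely the mechanism enforcing the asymptotic condition \eqref{req2}.

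For $\varepsilon>0$ small I would first solve the regularized, uniformly elliptic problems $G[u^{\varepsilon}]=\sigma$ in $\Omega$ with $u^{\varepsilon}=\varepsilon$ on $\partial\Omega$. Here the hypothesis that $\Gamma=\partial\Omega$ be mean-convex enters: it allows the construction of admissible sub- and supersolutions — pieces of rotationally symmetric constant-$H_{2}$ hypersurfaces of $\bH^{n+1}$ together with the geometry of $\Omega$ — which yield the $C^{0}$ bound, the global gradient bound and the boundary gradient bound, and which, once combined with the a priori estimates below, give solvability of each $\varepsilon$-problem via the method of continuity / degree theory for fully nonlinear elliptic equations. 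The boundary $C^{2}$ estimate is then obtained by the standard decomposition into tangential–tangential, tangential–normal and normal–normal second derivatives, with barriers tailored to the hyperbolic operator near $\partial\Omega$; concavity together with the Evans–Krylov theorem upgrades a $C^{2}$ bound to a uniform $C^{2,\alpha}$ bound, and Schauder iteration then gives interior $C^{\infty}$ bounds, all uniform in $\varepsilon$ on compact subsets of $\Omega$.

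The crux — and the step I expect to be the main obstacle — is the global bound on the principal curvatures $\kappa_{i}[u^{\varepsilon}]$, equivalently the interior $C^{2}$ estimate, uniform in $\varepsilon$ and valid for the \emph{whole} range $\sigma\in(0,1)$. The classical route applies the maximum principle to the largest principal curvature $\kappa_{\max}$ (with an auxiliary factor such as a function of $|Du|^{2}$ or a power of the height) and uses the concavity of $H_{2}^{1/2}$ together with the Codazzi identities to absorb the third order error terms; for $H_{2}$ on $K_{2}$ this is known to close only when $\sigma$ is bounded away from $0$ — the ranges $\sigma>\sigma_{0}$ of Guan–Spruck and Xiao — the obstruction being that a principal curvature in $K_{2}$ may be negative, so there is not enough positivity available to control the bad terms, a difficulty aggravated by the boundary degeneracy. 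The new reduction I would introduce sidesteps this: rather than estimating $\kappa_{\max}$ for the $H_{2}$-equation head-on, one couples the equation with a judiciously chosen auxiliary geometric quantity (built, say, from the hyperbolic support-type function $\langle\nu,\partial_{x_{n+1}}\rangle$ and/or a parallel hypersurface of $\bH^{n+1}$) so that the curvature bound is reduced to an estimate for a related elliptic operator carrying enough extra structure — enough positivity in its linearization, or effectively a reduction onto a sub-cone of $K_{2}$ on which the second order estimate is already understood — to run a maximum-principle argument insensitive to how small $\sigma$ is, the output then being transferred back to $\kappa[u^{\varepsilon}]$. Verifying that this reduction is admissible and that the reduced estimate holds for all $\sigma\in(0,1)$ is the technical heart of the argument.

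With all the above estimates uniform in $\varepsilon$, the family $\{u^{\varepsilon}\}$ is precompact in $C^{\infty}_{\loc}(\Omega)$ and, after passing to a subsequence, converges as $\varepsilon\to 0$ to some $u\in C^{\infty}(\Omega)\cap C^{0}(\overline{\Omega})$ with $u=0$ on $\partial\Omega$, solving $H_{2}^{1/2}(\kappa[u])=\sigma$ with $\kappa[u]\in K_{2}$ in $\Omega$. Its graph $\Sigma$ is then a smooth hypersurface with $H_{2}^{1/2}(\kappa[\Sigma])=\sigma$ and $\kappa[\Sigma]\in K_{2}$; the vanishing of $u$ along $\partial\Omega$, together with the precise rate extracted from the barriers, forces the hyperbolic distance along $\Sigma$ to tend to infinity as $x\to\partial\Omega$, so $\Sigma$ is complete and meets $\partial_{\infty}\bH^{n+1}$ orthogonally with $\partial\Sigma=\Gamma$. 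This produces the hypersurface asserted in the theorem.
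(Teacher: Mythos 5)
Your overall framework --- graph reduction over $\Omega$, regularized Dirichlet problems with boundary data $u^{\varepsilon}=\varepsilon$, $\varepsilon$-uniform a priori estimates, and passage to the limit --- is exactly the one the paper follows (and the paper simply defers all of it, except one estimate, to Guan--Spruck \cite{JEMS}). You also correctly locate the single missing ingredient: the global curvature bound, uniform in $\varepsilon$ and valid for all $\sigma\in(0,1)$, and you even guess the right auxiliary quantity, namely $\nu^{n+1}=\langle \nu,\partial_{x_{n+1}}\rangle/x_{n+1}$ in suitable normalization.

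However, at precisely that point your proposal stops being a proof: the passage ``one couples the equation with a judiciously chosen auxiliary geometric quantity \dots\ so that the curvature bound is reduced to an estimate for a related elliptic operator carrying enough extra structure \dots\ Verifying that this reduction is admissible \dots\ is the technical heart of the argument'' defers the entire novel content of the theorem. The paper's actual mechanism is quite specific and none of it appears in your sketch. One applies the maximum principle not to $\kappa_{\max}$ but to $Q=\log S_1(\kappa)-N\log\nu^{n+1}$, i.e.\ one estimates the mean curvature (which controls all $|\kappa_i|$ on $K_2$ by Lemma \ref{sharp 1}); the third order terms are controlled by the Shankar--Yuan almost-Jacobi inequality for $b=\log S_1$, whose coefficient $\alpha_n(\beta_n+\kappa_n/S_1)$ can be \emph{negative} for $n\ge 5$; this is compensated by retaining the square term $N\sum F^{ii}(\nabla_i\nu^{n+1})^2/(\nu^{n+1})^2$ from the second order critical condition and by the exact choice $N=3+2\sqrt{3}=1/a_0$ with $a_0=2/\sqrt{3}-1$; one then first proves $\kappa_n\ge -C$ at the maximum point (Claim \ref{claim 1}, using $\nabla_i\nu^{n+1}=\tfrac{u_i}{u}(\nu^{n+1}-\kappa_i)$, the positivity of $\sum F^{ii}\kappa_i^2$, and a divide-by-$\sum F^{ii}$ argument), and only then closes the estimate via a discriminant computation (Claim \ref{claim 2}) that absorbs the bad $-C\sum F^{ii}$ term into $(N-1)\sum F^{ii}$. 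Without these steps --- or a genuine substitute for them --- the proposal does not establish the theorem for small $\sigma$, which is exactly the regime left open by the prior literature.
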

In the half-space model, we view $\Gamma=\partial \Omega$ as a codimension one submanifold of the Euclidean space $\bR^n$ with $\Omega$ being a smooth bounded domain. The mean curvature of $\Gamma$ is computed in the Euclidean metric with respect to the interior normal vector. We say $\Gamma$ is mean-convex if its mean curvature is non-negative. Note that the mean-convexity is not an intrinsic hyperbolic notion.
\begin{remark}
The mean-convexity condition on $\partial \Omega$ is imposed to derive global gradient estimates for admissible solutions; see \cite[Proposition 4.1]{JEMS}. Other constraints in the theorem are also necessary. When $\sigma>1$, admissible solutions do not exist by comparing with horospheres; see \cite[Lemma 3.2]{JGA}. Also, when $\Gamma$ is a co-dimension two closed submanifold, there are topological obstructions for $\Gamma$ to bound a hypersurface with $f(\kappa) \in (0,1)$; see \cite{Rosenberg, Rosenberg-Spruck}. The embeddedness assumption on the given asymptotic boundary is essential as well; Lang \cite{Lang-1995} has constructed immersed examples with no solutions exist.
\end{remark}

Our proof follows the framework of Guan-Spruck \cite{JEMS}, in which by seeking $\Sigma=\{(x,u(x))\}$ as the limit of constant curvature graphs (with respect to vertical geodesics)
over a fixed compact domain in a horosphere, they reduced \eqref{req1}-\eqref{req2} to a Dirichlet problem
\begin{equation}
G(D^2u,Du,u)=\sigma, \quad u>0 \quad \text{in $\Omega$}, \label{the PDE}
\end{equation} for a fully nonlinear elliptic partial differential equation with 
\[u=0 \quad \text{on $\partial \Omega$}.\]  The Dirichlet problem is to be solved by the standard continuity method that is invoked along with the Evans-Krylov regularity theorem \cite{Evans, Krylov}, for which one needs to establish a priori estimates for admissible solutions $u$ up to the second order. As a matter of fact, most of these estimates have been perfectly established by Guan-Spruck \cite{JEMS}. In particular, the boundary second derivative estimate
\[\max_{\partial \Omega} |D^2u| \leq C\] is the centerpiece of their paper. The only issue occurs when deriving the global $C^2$ estimate
\begin{equation}
\max_{\Omega} |D^2u| \leq C\left(1+\max_{\partial \Omega} |D^2u|\right), \label{global C2}
\end{equation} and we shall now briefly explain the difficulties. Later, we will present a resolution for achieving Theorem \ref{k=2}.

The first difficulty is that the PDE \eqref{the PDE} is degenerate when $u=0$; see \cite[Section 2]{JEMS}. Alternatively, one may approximate the boundary condition by
\begin{equation}
u=\varepsilon>0 \quad \text{on $\partial \Omega$}. \label{the approximate boundary condition}
\end{equation} Also, for the purpose of taking limits $\varepsilon \to 0$, we would need the constant $C>0$ in \eqref{global C2} be independent of $\varepsilon$; this is the one and only place where Guan-Spruck \cite{JEMS} had to restrict the range of allowable $\sigma$'s.

Our task is thus reduced to obtaining \eqref{global C2} for all possible $\sigma \in (0,1)$. The second difficulty arises from the complicated structure of the scalar curvature equation. Unlike the mean curvature case where the equation is quasi-linear, or the Gauss curvature case where the equation is of Monge-\Ampere\ type, both equations are extensively studied in the literature and many techniques could be adapted to solve this problem. The scalar curvature case does not possess the same advantage. In particular, the recent novel techniques of Guan-Qiu \cite{Guan-Qiu}, Guan-Ren-Wang \cite{Guan-Ren-Wang}, Qiu \cite{Qiu-2}, Sheng-Urbas-Wang \cite{Sheng-Urbas-Wang} and Spruck-Xiao \cite{Spruck-Xiao-2} for solving the scalar curvature equation in the Euclidean space cannot be employed in our setting. Therefore, we will present a new way of obtaining \eqref{global C2} for the scalar curvature equation, which we hope would be inspiring for similar problems; this is one of our major contributions.

Our method consists of several new ingredients and will differ from \cite{SGAR,JEMS, Lu, JDG, Xiao-A, Xiao-B}. The first ingredient is the use of concavity inequalities which amounts to the extraction of enough positive terms from the concavity property of the operator $S_{k}^{1/k}$; this idea comes from the works of Guan-Ren-Wang \cite{Guan-Ren-Wang} and Ren-Wang \cite{Ren-Wang-1, Ren-Wang-2}, and the purpose is to eliminate those troublesome third order terms. For our problem \eqref{req1}-\eqref{req2}, we borrow such an inequality for the $S_2$ operator from a recent work \cite{SY} of Shankar-Yuan, in which they call it the ``almost-Jacobi'' inequality. Applying this inequality drably and following the arguments of Lu \cite{Lu}, we would be able to obtain Theorem \ref{k=2} in dimension $n=4$. Note that, the almost-Jacobi inequality, according to the sharp estimate \cite[Lemma 2.1]{SY}, cannot stay effective for $n \geq 5$. That is to say, in order to obtain Theorem \ref{k=2} in all higher dimensions, we would need more ingredients to the proof and this is the part where we introduce a set of new, original arguments which would reduce the proof to a rather simple situation (see Claim \ref{claim 1}) in which the almost-Jacobi inequality \textit{can} somehow be applicable in all dimensions and this would handle well all those troublesome third order terms. However, this is still not enough because the bad term in our problem is $-C\sum F^{ii}$; see \eqref{the bad term}. Unlike in \cite{Guan-Ren-Wang, Spruck-Xiao-2}, we are not allowed to add the quadratic $|X|^2$ to our auxiliary function. In fact, there seems to be no available techniques in the literature for giving rise to a $+C\sum F^{ii}$ term without resulting in a $C/\varepsilon$ dependence; this situation is severely different from the Euclidean counterparts. The second remarkable feature of our new method is that we \textit{can} deal with this issue (see Claim \ref{claim 2}) by utilizing a particular positive term which has long been neglected by others. In other words, we have successfully found a way to utilize it and consequently, we were able to obtain Theorem \ref{k=2} in all dimensions. For the moment, we shall not continue the elaboration in here, but provide a detailed exposition in section \ref{k=2 proof} to demonstrate our new arguments; see Remark \ref{end remark}.

\begin{remark}
Our new method might be possibly far-reaching, because the applicability of the almost-Jacobi inequality should be restricted due to the dimensional constraint $n \leq 4$ on its effectiveness. However, our proof will demonstrate that, under the settings of some geometric problems, the inequality could play a role in all dimensions, provided that other novel ingredients are invoked along with it. We hope our new arguments would encourage others to use it for possibly more applications because the $S_2$ equation has been studied in many other geometric problems. This can be seen as our secondary contribution.
\end{remark}

Now, we continue the literature review. In \cite{Lu}, by employing a powerful concavity inequality due to Ren-Wang \cite{Ren-Wang-1}, Lu solved \eqref{req1}-\eqref{req2} for all $\sigma \in (0,1)$ when $f=H_{n-1}^{1/(n-1)}$ and $K=K_{n-1}$, $n \geq 3$, which contains the $n=3$ case of our Theorem \ref{k=2}. Then, in \cite{Bin-3}, we followed the method of Lu and obtained the existence result when $f=H_{n-2}^{1/(n-2)}$ and $K=K_{n-2}$, $n \geq 5$, by employing a new concavity inequality of Ren-Wang \cite{Ren-Wang-2}. Incorporating our Theorem \ref{k=2} here, only the intermediate cases $3 \leq k \leq n-3$ are open for $f=H_{k}^{1/k}$ with $K=K_k$. In a recent preprint \cite{Hong-Zhang}, Hong-Zhang was able to obtain \eqref{global C2} for these intermediate cases by assuming additionally that the curvatures $\kappa[\Sigma]=(\kappa_1,\ldots,\kappa_n)$ are uniformly bounded from below by a negative constant. For the quotients $f=(H_k/H_l)^{1/(k-l)}$ in $K=K_k$, we have solved the case when $l=k-1$ in our previous investigation \cite{Bin-1}. For general $0 \leq l \leq k-1$, we would need to impose an additional curvature lower bound to obtain \eqref{global C2}; see \cite{Bin-3}. Some follow-up works on the strictly convex case are given by Hong-Li-Zhang \cite{Tsinghua}, Jiao-Jiao \cite{Jiao-Jiao} and Sui-Sun \cite{Sui-CPAA, Sui-CVPDE, Sui-IMRN}. Uniqueness of solutions are discussed in \cite{Nelli-Zhu, Zeno, SGAR, JDG}. 

To be more comprehensive, we mention the works of Cruz \cite{Cruz}, Guan-Spruck \cite{Guan-Spruck-2002, Guan-Spruck-2004}, Sui \cite{Sui-CPDE}, and Trudinger-Wang \cite{Trudinger-Wang-2002}, for studying \eqref{req1}-\eqref{req2} in $\bR^{n+1}$. On the other hand, for the same asymptotic Plateau problem in $\bH^{n+1}$ but with different classes of fully nonlinear equations, see \cite{He-Tu-Xiang, Chen-Sui-Sun, Yang-Lu}. Finally, we would also like to call attention to a series of articles \cite{Smith-2009, Smith-2013, Smith-preprint} by Smith for some related studies.

The rest of this article is organized as follows. In section \ref{preliminaries}, we review the geometry of graphic hypersurfaces in hyperbolic space and collect a few preliminary lemmas. The crucial curvature estimate will be established in section \ref{k=2 proof} which would yield the desired \eqref{global C2} and Theorem \ref{k=2} would follow as a consequence of the theory in \cite{JEMS}.

\textit{Note.} Some secondary results were included in an early preprint version of this manuscript, which were later removed due to their minor importance; they are now transferred to a separate note \cite{Bin-3}.

\section{Preliminaries} \label{preliminaries}
For the hyperbolic space, we will use the half-space model 
\[\bH^{n+1}=\{(x,x_{n+1}) \in \bR^{n+1}: x_{n+1}>0\}\] equipped with the hyperbolic metric
\[ds^2=\frac{\sum_{i=1}^{n+1} dx_{i}^2}{x_{n+1}^2}.\] In this way, the ideal boundary $\partial_{\infty}\bH^{n+1}$ is naturally identified with $\bR^n=\bR^n \times \{0\}\subseteq \bR^{n+1}$ and \eqref{req2} may be understood in the Euclidean sense. For convenience, we say $\Sigma$ has compact asymptotic boundary if $\partial \Sigma \subseteq \partial_{\infty}\bH^{n+1}$ is compact with respect to the Euclidean metric in $\bR^{n}$.

Let $\Sigma$ be a hypersurface in $\bH^{n+1}$. We shall use $g$ and $\nabla$ to denote the induced hyperbolic metric and the Levi-Civita connection on $\Sigma$, respectively. Viewing $\Sigma$ as a submanifold of $\bR^{n+1}$, we shall use $\tilde{g}$ to denote the induced metric on $\Sigma$ from $\bR^{n+1}$ and $\tilde{\nabla}$ is its Levi-Civita connection.

Throughout, all hypersurfaces in $\bH^{n+1}$ that we consider are assumed to be connected and orientable. If $\Sigma$ is a complete hypersurface in $\bH^{n+1}$ with compact asymptotic boundary at infinity, then the normal vector field of $\Sigma$ is chosen to be the one pointing towards the unique unbounded region in $\bR_{+}^{n+1} \setminus \Sigma$, and both the hyperbolic and Euclidean principal curvatures are calculated with respect to this normal vector field.

Note that
\[\nabla_{\frac{\partial}{\partial x_i}} \frac{\partial}{\partial x_{j}}=\delta_{ij}\frac{1}{x_{n+1}}\frac{\partial}{\partial x_{n+1}} \quad \text{and} \quad \nabla_{\frac{\partial}{\partial x_{\alpha}}} \frac{\partial}{\partial x_{n+1}}=-\frac{1}{x_{n+1}}\frac{\partial}{\partial x_{\alpha}}\] for $ 1 \leq i,j \leq n$ and $1 \leq \alpha \leq n+1$. Suppose $\Sigma$ is the vertical graph of a function $x_{n+1}=u(x_1,\ldots,x_n)$ over a domain $\Omega$ in $\bR^n$. Since the induced tangent vectors on $\Sigma$ are 
\[X_i=\frac{\partial}{\partial x_i}+u_i\frac{\partial}{\partial x_{n+1}},\] the first fundamental form is then given by
\[g_{ij}=\langle X_i,X_j\rangle=\frac{1}{u^2}(\delta_{ij}+u_iu_j)=\frac{\tilde{g}_{ij}}{u^2}\] with the inverse
\[g^{ij}=u^2\left(\delta_{ij}-\frac{u_iu_j}{w^2}\right), \quad \tilde{g}^{ij}=\delta_{ij}-\frac{u_iu_j}{w^2}.\] Hence, with respect to the hyperbolic upward unit normal $\bn=u\nu$, where $\nu$ is the Euclidean normal given by 
\[\nu=\left(\frac{-Du}{w},\frac{1}{w}\right), \quad w=\sqrt{1+|Du|^2}, \quad |Du|^2=\delta^{ij}u_iu_{j}=\sum_{i=1}^{n}u_{i}^2,\]  we use
\[\Gamma_{ij}^{k}=\frac{1}{x_{n+1}}(-\delta_{jk}\delta_{i,n+1}-\delta_{ik}\delta_{j,n+1}+\delta_{ij}\delta_{k,n+1})\] to obtain that
\[\nabla_{X_i}X_j=\left(\frac{\delta_{ij}}{x_{n+1}}+u_{ij}-\frac{u_iu_j}{x_{n+1}}\right)e_{n+1}-\frac{u_je_i+u_ie_j}{x_{n+1}}.\] Thus,
\begin{align*}
h_{ij}&=\langle \nabla_{X_i}X_j,\bn\rangle=\frac{1}{uw}\left(\frac{\delta_{ij}}{u}+u_{ij}-\frac{u_iu_j}{u}+2\frac{u_iu_j}{u}\right)\\
&=\frac{1}{u^2w}\left(\delta_{ij}+u_iu_j+uu_{ij}\right)=\frac{\tilde{h}_{ij}}{u}+\frac{\nu^{n+1}}{u^2}\tilde{g}_{ij}.
\end{align*} The hyperbolic principal curvatures $\kappa_i$ of $\Sigma$ are the roots of the characteristic equation
\[\det(h_{ij}-\kappa g_{ij})=u^{-n}\det \left(\tilde{h}_{ij}-\frac{1}{u}\left(\kappa-\frac{1}{w}\right)\tilde{g}_{ij}\right)=0.\] In particular, we have the following simple relation:
\begin{equation}
\kappa_i=x_{n+1}\tilde{\kappa}_{i}+\nu^{n+1}, \quad 1 \leq i \leq n. \label{simple relation}
\end{equation}

\begin{remark}
The component 
\[\nu^{n+1}=\frac{1}{\sqrt{1+|Du|^2}}\] will prove to be useful in section \ref{k=2 proof}.
\end{remark}

Let $A=\{a_{ij}\}$ be an $n \times n$ symmetric matrix and let 
\[F: \{\text{$n \times n$ symmetric matrices}\} \to \bR\] be an operator of the form $F(A)=f(\lambda(A))$, where $\lambda(A)=(\lambda_1,\ldots,\lambda_n)$ are the eigenvalues of $A$ and $f: \bR^n \to \bR$ is some smooth symmetric function of $n$ variables. We denote by
\[F^{ij}(A):=\frac{\partial F}{\partial a_{ij}}(A), \quad F^{ij,rs}(A)=\frac{\partial^2 F}{\partial a_{ij} \partial a_{rs}}(A).\] Since $A$ is symmetric, the matrix $F^{ij}(A)$ is symmetric as well. Moreover, when $A$ is diagonal, we have $F^{ij}=f_i\delta_{ij}$, where
\[f_i:=\frac{\partial f}{\partial \lambda_i}.\] The equation \eqref{req1} can be written in a local frame as 
\[F(A[\Sigma])=\sigma\] for $A[\Sigma]=\{g^{ik}h_{kj}\}$. 

Next, we observe that
\[\frac{\partial}{\partial \kappa_i} S_k(\kappa_1,\ldots,\kappa_n)=S_{k-1}(\kappa_1,\ldots,\kappa_{i-1},0,\kappa_{i+1},\ldots,\kappa_n).\] We will denote the partial derivatives by the following notations
\[\frac{\partial}{\partial \kappa_i}S_k (\kappa_1,\ldots,\kappa_n)=S_{k-1}(\kappa|i)=S_{k}^{ii}(\kappa)\]
interchangeably. The notations $S_{k}^{ii,jj}(\kappa)$ and $S_{k-2}(\kappa|ij)$ are defined in a similar way for second order partial derivatives. Standard properties of the elementary symmetric polynomials can be found in \cite{Wang-book, Spruck-MSRI, Lieberman}. Here we prove two lemmas that are specialized for the $S_2$ operator.

\begin{lemma}\label{sharp 1}
Let $n \geq 2$.
Suppose that $\kappa=(\kappa_1,\ldots,\kappa_n) \in K_2$ is ordered as $\kappa_1 \geq \kappa_2 \geq \cdots \geq \kappa_n$. Then we have
\[\kappa_n > -\frac{n-2}{n}\cdot S_1(\kappa).\]
\end{lemma}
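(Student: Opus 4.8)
The plan is to bound $\kappa_n$ from below by exploiting the two constraints $S_1(\kappa) > 0$ and $S_2(\kappa) > 0$ that define membership in $K_2$. Write $S_1 = \kappa_n + \sum_{i<n}\kappa_i$ and split $S_2$ according to whether or not the index $n$ appears: $S_2(\kappa) = \kappa_n S_1(\kappa|n) + S_2(\kappa|n)$, where $S_1(\kappa|n) = \sum_{i<n}\kappa_i = S_1(\kappa) - \kappa_n$ and $S_2(\kappa|n) = \sum_{i<j<n}\kappa_i\kappa_j$ is the second elementary symmetric function of the remaining $n-1$ variables $\kappa_1,\dots,\kappa_{n-1}$.

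The key step is a sharp upper bound on $S_2(\kappa|n)$ in terms of $S_1(\kappa|n)$. Since $\kappa_1,\dots,\kappa_{n-1}$ are real numbers, Newton's inequality (or simply the Cauchy--Schwarz inequality applied to $(\sum_{i<n}\kappa_i)^2 = \sum_{i<n}\kappa_i^2 + 2S_2(\kappa|n)$) gives
\[
S_2(\kappa|n) \le \frac{n-2}{2(n-1)}\bigl(S_1(\kappa|n)\bigr)^2 = \frac{n-2}{2(n-1)}\bigl(S_1(\kappa) - \kappa_n\bigr)^2.
\]
Combining this with $0 < S_2(\kappa) = \kappa_n\bigl(S_1(\kappa) - \kappa_n\bigr) + S_2(\kappa|n)$ yields the quadratic inequality
\[
0 < \kappa_n\bigl(S_1(\kappa) - \kappa_n\bigr) + \frac{n-2}{2(n-1)}\bigl(S_1(\kappa) - \kappa_n\bigr)^2.
\]
Because $\kappa_n$ is the smallest entry and $S_1(\kappa) > 0$, we have $S_1(\kappa) - \kappa_n = \sum_{i<n}\kappa_i \ge \frac{n-1}{n}S_1(\kappa) > 0$; in particular we may divide by the positive quantity $S_1(\kappa) - \kappa_n$, obtaining
\[
0 < \kappa_n + \frac{n-2}{2(n-1)}\bigl(S_1(\kappa) - \kappa_n\bigr),
\]
i.e. $\frac{n}{2(n-1)}\kappa_n > -\frac{n-2}{2(n-1)}S_1(\kappa)$, which rearranges to $\kappa_n > -\frac{n-2}{n}S_1(\kappa)$, as claimed. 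The strictness of the final inequality is inherited from the strict inequality $S_2(\kappa) > 0$.

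The main obstacle — really the only subtle point — is justifying that the factor $S_1(\kappa) - \kappa_n$ by which we divide is strictly positive, so that the direction of the inequality is preserved; this is where the ordering hypothesis $\kappa_1 \ge \cdots \ge \kappa_n$ together with $S_1(\kappa) > 0$ is used, via the elementary observation that the smallest of $n$ numbers is at most their average, so $\kappa_n \le \frac{1}{n}S_1(\kappa)$ and hence $S_1(\kappa) - \kappa_n \ge \frac{n-1}{n}S_1(\kappa) > 0$. Everything else is the standard sharp comparison between $S_1$ and $S_2$ of $n-1$ real variables, and one should double-check the constant $\frac{n-2}{2(n-1)}$ propagates correctly to give exactly $\frac{n-2}{n}$ in the conclusion (the dimension drops from $n$ to $n-1$ because one variable has been peeled off).
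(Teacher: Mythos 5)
Your proof is correct and follows essentially the same route as the paper's: both hinge on the decomposition $S_2(\kappa)=\kappa_n S_1(\kappa|n)+S_2(\kappa|n)$ together with the sharp Newton-type bound $2S_2(\kappa|n)\le \frac{n-2}{n-1}\bigl(S_1(\kappa|n)\bigr)^2$ (which the paper derives via the traceless-part computation and you get from Cauchy--Schwarz). The only cosmetic difference is that you divide by $S_1(\kappa)-\kappa_n>0$ directly instead of case-splitting on the sign of $\kappa_n$ and manipulating the ratio $S_1(\kappa)/(-\kappa_n)$, and your justification of that positivity is sound.
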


\begin{proof}[Proof of Lemma \ref{sharp 1}]
The lemma is trivial if $n=2$ or $\kappa_n \geq 0$. Assume $n \geq 3$ and $\kappa_n<0$, we write $\kappa'=(\kappa|n)=(\kappa_1,\ldots,\kappa_{n-1})$. From the identity
\[S_2(\kappa)=\kappa_nS_1(\kappa|n)+S_2(\kappa|n),\] it follows that
\[\frac{S_1(\kappa)}{-\kappa_n}=\frac{[S_1(\kappa)-\kappa_n]+\kappa_n}{-\kappa_n}=\frac{S_1(\kappa')+\kappa_n}{-\kappa_n}=-1+\frac{[S_1(\kappa')]^2}{S_2(\kappa')-S_2(\kappa)}.\] Note that since $\kappa_n<0$ and $\kappa \in K_2$, we have $0<S_2(\kappa')-S_2(\kappa)<S_2(\kappa')$. Hence,
\[\frac{S_1(\kappa)}{-\kappa_n}>-1+\frac{[S_1(\kappa')]^2}{S_2(\kappa')}.\]
The lemma will follow by expressing $S_1(\kappa')^2$ as a scalar multiple of $S_2(\kappa')$.

The quantity $S_1(\kappa')$ can be seen as the trace of some $(n-1) \times (n-1)$ matrix $A$ with eigenvalues $\kappa'=(\kappa_1,\ldots,\kappa_{n-1})$. The traceless part is then $A-\frac{\tr(A)I}{n-1}$ whose eigenvalues are $\kappa_{i}'-\frac{S_1(\kappa')}{n-1}$; we will denote this vector by $\kappa'^{\perp}$. Now, by a direct computation, we have
\begin{align*}
|\kappa'^{\perp}|^2&=\sum_{i=1}^{n-1}\left[\kappa_{i}^{'}-\frac{S_1(\kappa')}{n-1}\right]^2 \\
&=\sum_{i=1}^{n-1} \left[\kappa_{i}'^{2}-\frac{2S_{1}(\kappa')}{n-1}\kappa_i'+\frac{S_{1}(\kappa')^2}{(n-1)^2}\right] \\
&=\left(\sum_{i=1}^{n-1} \kappa_{i}'^{2}\right)-\frac{2[S_{1}(\kappa')]^2}{n-1}+\frac{[S_{1}(\kappa')]^2}{n-1}\\
&=\left([S_{1}(\kappa')]^2-2S_2(\kappa')\right)-\frac{2[S_{1}(\kappa')]^2}{n-1}+\frac{[S_{1}(\kappa')]^2}{n-1}\\
&=\frac{n-2}{n-1}S_{1}(\kappa')^2-2S_2(\kappa'),
\end{align*} where we have used the elementary identity
\begin{equation}
2S_2(\kappa)=S_1(\kappa)^2-\sum_{i=1}^{n}\kappa_{i}^2, \quad \kappa \in \bR^n. \label{elementary identity}
\end{equation} 

\begin{remark}
We would like to thank Ravi Shankar for helping us understand this step.
\end{remark}

Rearranging, we have
\[[S_{1}(\kappa')]^2=\frac{n-1}{n-2}[2S_2(\kappa')+|\kappa'^{\perp}|^2] \geq 2\frac{n-1}{n-2}S_2(\kappa')\] and the desired bound 
\[\frac{S_1(\kappa)}{-\kappa_n}>-1+\frac{[S_1(\kappa')]^2}{S_2(\kappa')} \geq -1+\frac{2(n-1)}{n-2}=\frac{n}{n-2}\] follows.
\end{proof}

\begin{lemma} \label{sharp 2}
Let $\kappa=(\kappa_1,\ldots,\kappa_n) \in K_2$ be ordered as $\kappa_1 \geq \kappa_2 \geq \cdots \geq \kappa_n$ and let $f(\kappa)=S_2(\kappa)$. Then
\[\frac{S_2(\kappa)}{S_1(\kappa)} \leq f_1(\kappa) \leq \left(\frac{n-1}{n}\right)S_1(\kappa)\] and 
\[\left(1-\frac{1}{\sqrt{2}}\right)S_1(\kappa) \leq f_i(\kappa) \leq 2\left(\frac{n-1}{n}\right)S_1(\kappa), \quad i \geq 2.\]
\end{lemma}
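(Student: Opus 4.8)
The starting point is the explicit identity $f_i(\kappa)=\partial S_2/\partial\kappa_i=S_1(\kappa)-\kappa_i$, which follows from the derivative formula $\frac{\partial}{\partial\kappa_i}S_2=S_1(\kappa|i)$ recorded above together with $S_1(\kappa|i)=S_1(\kappa)-\kappa_i$ (cf. Lemma \ref{sigma-k properties 1}). Writing $\sigma_1:=S_1(\kappa)>0$, which is positive because $\kappa\in K_2$, each of the four asserted inequalities becomes a statement about the ordered tuple $(\kappa_1,\dots,\kappa_n)$ subject to $S_1>0$ and $S_2>0$. For the two bounds on $f_1=\sigma_1-\kappa_1$: the inequality $f_1\leq\frac{n-1}{n}\sigma_1$ is equivalent to $\kappa_1\geq\frac1n\sigma_1$, which holds since $\kappa_1$ is the largest of the $\kappa_i$ and hence at least their average; for $f_1\geq S_2/\sigma_1$ I would use an algebraic identity, writing $\sigma_1=\kappa_1+f_1$ and $S_2(\kappa)=\kappa_1 f_1+S_2(\kappa_2,\dots,\kappa_n)$, so that $f_1\sigma_1-S_2(\kappa)=f_1^2-S_2(\kappa_2,\dots,\kappa_n)=\tfrac12 f_1^2+\tfrac12\sum_{j\geq2}\kappa_j^2\geq0$, and then dividing by $\sigma_1>0$ (the hypothesis $S_2>0$ being what makes $S_2/\sigma_1$ a meaningful lower bound in the first place).

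For $i\geq2$, since $\kappa_n\leq\kappa_i\leq\kappa_2$ the quantities satisfy $f_2\leq f_i\leq f_n$, so it suffices to bound the two extremes. The upper bound is then immediate from Lemma \ref{sharp 1}: $f_i\leq f_n=\sigma_1-\kappa_n<\sigma_1+\frac{n-2}{n}\sigma_1=\frac{2(n-1)}{n}\sigma_1$. The lower bound reduces to the single claim $\kappa_2\leq\frac1{\sqrt2}\sigma_1$, after which $f_i\geq f_2=\sigma_1-\kappa_2\geq(1-\tfrac1{\sqrt2})\sigma_1$. To prove the claim I would argue by contradiction: if $\kappa_2>\frac1{\sqrt2}\sigma_1$ then $p:=\kappa_1+\kappa_2\geq2\kappa_2>\sqrt2\,\sigma_1>0$; splitting $S_2(\kappa)=\kappa_1\kappa_2+p(\sigma_1-p)+S_2(\kappa_3,\dots,\kappa_n)$ according to whether the two indices lie in $\{1,2\}$, and using the crude estimates $\kappa_1\kappa_2\leq\tfrac{p^2}{4}$ and $S_2(\kappa_3,\dots,\kappa_n)\leq\tfrac12(\sigma_1-p)^2$, the right-hand side is at most $-\tfrac{p^2}{4}+\tfrac{\sigma_1^2}{2}$, which is strictly negative since $p^2>2\sigma_1^2$; this contradicts $\kappa\in K_2$.

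I expect the claim $\kappa_2\leq\frac1{\sqrt2}\sigma_1$ to be the only real content of the lemma, and the constant $1-1/\sqrt2$ is precisely what the worst case of the quadratic $-\tfrac{p^2}{4}+\tfrac{\sigma_1^2}{2}$ forces. The point to be checked carefully is that replacing $S_2(\kappa_3,\dots,\kappa_n)$ by the rough bound $\tfrac12(\sigma_1-p)^2$ (and $\kappa_1\kappa_2$ by $\tfrac{p^2}{4}$) does not discard so much that the argument breaks; everything else is bookkeeping with the identities of Lemma \ref{sigma-k properties 1} and the ordering of the $\kappa_i$.
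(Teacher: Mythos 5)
Your proposal is correct, and three of its four bounds follow the paper's proof almost verbatim: the upper bound on $f_1$ via $\kappa_1\geq S_1/n$, the upper bound on $f_i$ for $i\geq2$ via Lemma \ref{sharp 1}, and the lower bound $f_1\geq S_2/S_1$, which the paper writes as $f_1=(2S_2+\sum_{j\neq1}\kappa_j^2)/(S_1+\kappa_1)\geq 2S_2/(2S_1)$ --- algebraically the same nonnegativity of $f_1S_1-S_2$ that you verify by expanding $f_1^2-S_2(\kappa_2,\ldots,\kappa_n)$. Where you genuinely diverge is the lower bound for $i\geq2$. The paper proves the stronger, $i$-dependent estimate $f_i\geq(1-i^{-1/2})S_1$ directly: if $\kappa_i>0$, then $\kappa_i\leq\sqrt{(\kappa_1^2+\cdots+\kappa_i^2)/i}\leq S_1/\sqrt{i}$, the last step using $S_1=\sqrt{2S_2+|\kappa|^2}\geq\sqrt{\kappa_1^2+\cdots+\kappa_i^2}$. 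You instead reduce everything to the single claim $\kappa_2\leq S_1/\sqrt{2}$ and prove it by contradiction through the decomposition $S_2=\kappa_1\kappa_2+p(S_1-p)+S_2(\kappa_3,\ldots,\kappa_n)\leq-\tfrac{p^2}{4}+\tfrac{S_1^2}{2}$ with $p=\kappa_1+\kappa_2$; I checked the algebra and the two crude estimates ($\kappa_1\kappa_2\leq p^2/4$ and $2S_2(x)\leq S_1(x)^2$), and the argument is sound. Your route makes it transparent where the constant $1/\sqrt{2}$ comes from, but it only delivers the $i=2$ constant for all $i\geq2$, whereas the paper's one-line computation yields the sharper constant $1-i^{-1/2}$ for each $i$ essentially for free; since the lemma only asserts the $i$-independent constant, both suffice.
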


\begin{proof}[Proof of Lemma \ref{sharp 2}]
Observe that for all $1 \leq i \leq n$, we have $f_i(\kappa)=S_1(\kappa)-\kappa_i$. The upper bound for $f_{1}$ then comes from the fact that $S_1=\kappa_1 + \cdots + \kappa_n \leq n \kappa_1$. On the other hand, the upper bound for $i \geq 2$ follows from Lemma \ref{sharp 1}:
\[f_i \leq f_n=S_1-\kappa_n<\left(1+\frac{n-2}{n}\right)S_1.\]

For the lower bounds, we use the elementary identity \eqref{elementary identity} to get 
\[f_1=S_1-\kappa_1=\frac{2S_2+\sum_{j \neq 1}\kappa_{j}^2}{S_1+\kappa_1}\geq \frac{2S_2}{2S_1}=\frac{S_2}{S_1}.\] Similarly, when $i \geq 2$, we have the bound already if $\kappa_i \leq 0$; while if $\kappa_i>0$, we have
\[\kappa_i \leq \sqrt{\frac{\kappa_{1}^2+\cdots+\kappa_{i}^2}{i}}\] by the AM-GM inequality
\[\frac{\kappa_{1}^2+\cdots+\kappa_{i}^2}{i} \geq (\kappa_{1}^2\cdots \kappa_{i}^2)^{\frac{1}{i}} \geq \kappa_{i}^2.\]
Thus, we have
\[f_i=S_1-\kappa_i \geq S_1-\sqrt{\frac{\kappa_{1}^2+\cdots+\kappa_{i}^2}{i}} \geq (1-i^{-1/2})S_1\] where we have again used the elementary identity \eqref{elementary identity} i.e.
\[S_1=\sqrt{2S_2+|\kappa|^2} \geq \sqrt{\kappa_{1}^2+\cdots+\kappa_{i}^2}.\]
\end{proof}
In \cite{SY}, Shankar-Yuan proved a so-called ``almost-Jacobi'' inequality for $2$-convex solutions to the quadratic Hessian equation
\begin{equation}
S_{2}(\lambda(D^2u))=\const, \quad \lambda(D^2u) \in K_2, \quad \text{in $B_1$}. \label{quadratic Hessian equation with a constant right-hand side}
\end{equation} By following the exact same procedure in \cite{SY}, it can be readily verified that the inequality also holds for $2$-convex hypersurfaces satisfying the constant scalar curvature equation in $\bH^{n+1}$ because the only change occurs for the fourth order terms where we would have
\[h_{iikk}=h_{kkii}+(\kappa_i\kappa_k-1)(\kappa_i-\kappa_k)\] in place of
\[u_{iikk}=u_{kkii}.\] This would only result in some additional curvature terms of no harm; a redundant verification can be found in \cite{Bin-thesis}. 

\begin{proposition} \label{almost-Jacobi}
Let $\Sigma$ be a smooth complete hypersurface in $\bH^{n+1}$ which solves
\[S_2(\kappa[\Sigma])=\const, \quad \kappa[\Sigma] \in K_2 \quad \text{on $\Sigma$}.\] Then at a point $X_0 \in \Sigma$ where the second fundamental form $h_{ij}=\kappa_i\delta_{ij}$ is diagonal and $\kappa_1 \geq \cdots \geq \kappa_n$, we have, for the quantity $b=\log S_1(\kappa)$, that
\begin{gather}
\begin{split}
\sum_{i=1}^{n} F^{ii}\nabla_{ii} b \geq &\ \alpha_n\left(\beta_n+\frac{\kappa_{n}}{S_1(\kappa)}\right)\sum_{i=1}^{n} F^{ii}(\nabla_{i}b)^2-\left(\sum_{i=1}^{n} F^{ii} + \sum_{i=1}^{n} F^{ii}\kappa_{i}^2\right) \\
&+2F\cdot S_1(\kappa)-\frac{4F^2}{S_1(\kappa)},
\end{split} \label{almost-Jacobi inequality}
\end{gather} where
\[\alpha_n:=\frac{\sqrt{3n^2+1}-(n+1)}{3(n-1)}, \quad \beta_n:=\frac{\sqrt{3n^2+1}-(n-1)}{2n}.\]
\end{proposition}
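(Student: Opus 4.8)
The plan is a Simons-type second-order calculation for $b=\log S_1$ at the marked point $X_0$, arranging the output into three groups: cubic terms in $\nabla h$, quadratic terms in $\nabla b$, and zeroth-order curvature terms coming from the ambient $\bH^{n+1}$. Fix a normal frame at $X_0$ with $g_{ij}=\delta_{ij}$ and $h_{ij}=\kappa_i\delta_{ij}$. Since $F=S_2$ we have $F^{ij}=S_1\delta_{ij}-h_{ij}$, so at $X_0$ the matrix $(F^{ij})$ is diagonal with $F^{ii}=S_1-\kappa_i=S_2^{ii}$. Writing $\nabla_{ii}b=\nabla_{ii}S_1/S_1-(\nabla_i b)^2$, the whole problem reduces to evaluating $\sum_i F^{ii}\nabla_{ii}S_1=\sum_{i,k}F^{ii}\nabla_{ii}h_{kk}$.

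First I would use the Codazzi equations — which in a space form make $\nabla h$ a fully symmetric $3$-tensor — together with the Ricci identity to exchange $\nabla_{ii}h_{kk}$ for $\nabla_{kk}h_{ii}$ at the cost of a curvature commutator; the Gauss equation with ambient curvature $-1$ gives (at $X_0$, for $i\neq k$) $\nabla_{ii}h_{kk}-\nabla_{kk}h_{ii}=(\kappa_i-\kappa_k)(1-\kappa_i\kappa_k)$. Contracting with $F^{ii}$, summing, and simplifying by Lemma \ref{sigma-k properties 1} and Newton's identities (so that $\sum F^{ii}\kappa_i=2F$, $\sum F^{ii}\kappa_i^2=S_1S_2-3S_3$, and the $S_3$-contributions cancel), the curvature term $\sum_{i,k}F^{ii}(\nabla_{ii}h_{kk}-\nabla_{kk}h_{ii})$ comes out equal to $S_1$ times the stated expression $-\big(\sum F^{ii}+\sum F^{ii}\kappa_i^2\big)+2FS_1-4F^2/S_1$ plus an extra non-negative term ($2nF=2nS_2>0$ on $K_2$), which I would throw away — this is exactly the source of the inequality rather than an equality in \eqref{almost-Jacobi inequality}. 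For the swapped term, differentiate $F(A[\Sigma])=\const$ twice in a fixed direction $e_k$: because $S_2$ is quadratic, $F^{ij,rs}\nabla_k h_{ij}\nabla_k h_{rs}=(\nabla_k S_1)^2-|\nabla_k h|^2$, hence $\sum_i F^{ii}\nabla_{kk}h_{ii}=|\nabla_k h|^2-(\nabla_k S_1)^2$, and summing over $k$ gives $\sum_{i,k}F^{ii}\nabla_{kk}h_{ii}=|\nabla h|^2-S_1^2|\nabla b|^2$. Putting the pieces together, one arrives at
\begin{align*}
\sum_i F^{ii}\nabla_{ii}b\;&\geq\;\frac{|\nabla h|^2}{S_1}-S_1|\nabla b|^2-\sum_i F^{ii}(\nabla_i b)^2\\
&\qquad -\Big(\sum_i F^{ii}+\sum_i F^{ii}\kappa_i^2\Big)+2FS_1-\frac{4F^2}{S_1}.
\end{align*}

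It then remains to dominate $-S_1|\nabla b|^2-\sum_i F^{ii}(\nabla_i b)^2$ by the cubic term $|\nabla h|^2/S_1$, keeping a positive multiple $\alpha_n(\beta_n+\kappa_n/S_1)\sum_i F^{ii}(\nabla_i b)^2$ on the right. Because $\nabla_i b=\nabla_i S_1/S_1$ and $T:=\nabla h$ is fully symmetric and obeys $\sum_p(S_1-\kappa_p)T_{ppk}=0$ (which is just $\nabla_k S_2=0$), this is a purely algebraic inequality for symmetric $3$-tensors — precisely the almost-Jacobi estimate of Shankar-Yuan \cite[Lemma 2.1]{SY}. Its validity uses $\kappa\in K_2$, in particular the sharp lower bound $\kappa_n>-\frac{n-2}{n}S_1$ of Lemma \ref{sharp 1}, and the constants $\alpha_n,\beta_n$ (with their $\sqrt{3n^2+1}$) are exactly what comes out of optimizing the resulting quadratic form; this is also where one sees that the surviving coefficient keeps a definite sign only for $n\leq 4$.

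The step I expect to be the main obstacle is the algebraic bookkeeping of the geometric part: verifying that the curvature contraction really does collapse to the four-term expression above — with the $S_3$-terms cancelling and a clean non-negative remainder separating off — and, above all, massaging the cubic and gradient terms into the exact normalization in which \cite[Lemma 2.1]{SY} applies, i.e.\ writing $\sum_i F^{ii}(\nabla_i b)^2=S_1^{-2}\sum_i(S_1-\kappa_i)(\nabla_i S_1)^2$ and feeding in the constraint as $S_1\nabla_k S_1=\sum_p\kappa_p\nabla_k h_{pp}$. Pinning down the signs in the Ricci/Gauss commutator is the subtle numerical point on which the direction of the whole inequality turns.
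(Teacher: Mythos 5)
Your proposal is correct and follows essentially the same route as the paper's Appendix proof: the same Simons-type commutation with the hyperbolic curvature term $(\kappa_i-\kappa_k)(1-\kappa_i\kappa_k)$, the same identification of the discarded non-negative remainder $2nF/S_1$, the same use of the twice-differentiated equation to produce $|\nabla h|^2-S_1^2|\nabla b|^2$, and the same reduction of the cubic terms to the Shankar--Yuan algebraic inequality under the constraint $\sum_p F^{pp}T_{ppk}=0$. The only difference is that you cite \cite[Lemma 2.1]{SY} for that final algebraic step, whereas the paper reproduces the projection/trace/determinant verification in full --- a verification the paper itself describes as redundant, so this is not a gap.
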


\begin{remark}
This inequality could work equally well in our proof of the curvature estimate if we instead tried to derive it for the quantity $b=\log \kappa_{\max}$, because our use of \eqref{almost-Jacobi inequality} is only to control those troublesome third order terms. Following the terminology of Yuan \cite{Yuan-QHE}, we call it the ``maximum eigenvalue Jacobi inequality'', and the one for $b=\log S_1(\kappa)$ is called the ``trace Jacobi inequality''. Besides the convenience of citing directly from Shankar-Yuan's paper \cite{SY}, the trace Jacobi inequality is usually preferred in at least two aspects. The first advantage is that one would not need to worry about the issue of $\kappa_{\max}$ being non-smooth; see \cite[Proposition 2.4]{Warren-Yuan}. Secondly, according to Shankar-Yuan \cite{SY-Duke}, the trace Jacobi inequality could ``rescue'' the saddleness of semi-convex solutions after the Legendre-Lewy transformation and would help yield rigidity results for such solutions. However, as a compensation, the computational efforts for deriving a trace Jacobi inequality would be much more involved. In fact, it seems to us that the trace Jacobi inequality was not anticipated to hold true until a proof was presented by Qiu \cite{Qiu-1, Qiu-2} for $2$-convex solutions in dimension three. Later, Shankar-Yuan \cite{SY-Duke} was able to obtain the trace Jacobi inequality for semi-convex solutions to \eqref{quadratic Hessian equation with a constant right-hand side} in all dimensions. When $n \geq 4$, the Jacobi inequality would fail without stronger convexity conditions. In the most recent work \cite{SY}, Shankar-Yuan obtained a weaker form for $b=\log \Delta u$ in dimension four with $u$ being $2$-convex, which they call it ``the almost-Jacobi inequality''.
\end{remark}

\begin{remark}
The longstanding problem of obtaining purely interior $C^2$ estimates for $2$-convex solutions to the quadratic Hessian equation and the prescribed scalar curvature equation has attracted much attention in the past two decades. The Jacobi inequality has always been one of the major ingredients for reaching the desired a priori estimates. Due to our unfamiliarity with the relevant theories, we do not comment further but refer the interested reader to the work of Guan-Qiu \cite{Guan-Qiu}, Lu \cite{Lu-1, Lu-2}, Qiu \cite{Qiu-1, Qiu-2}, Shankar-Yuan \cite{SY-CVPDE, SY}, and Warren-Yuan \cite{Warren-Yuan}; see also Yuan's survey \cite{Yuan-QHE} and the compactness argument of McGonagle-Song-Yuan \cite{MSY}.
\end{remark}

\begin{remark}
Despite the important role that the Jacobi inequality has played in deriving the purely interior $C^2$ estimates for the $S_2$ equations, there are two papers that entail the Jacobi inequalities but had not receive enough attention. The first one is Chen's investigation \cite{Chen} on optimal concavity of the $S_2$ operator. Guan-Qiu \cite{Guan-Qiu} has demonstrated how to use this optimal concavity result to derive the maximum eigenvalue Jacobi inequality in all dimensions for solutions whose $S_3$'s are bounded from below. The second paper is due to Ren-Wang \cite{Ren-Wang-1}, in which they have proved a powerful concavity inequality for the $S_{n-1}$ operator. Taking $n=3$ will lead to the maximum eigenvalue Jacobi inequality in dimension three, which was obtained by Warren-Yuan \cite{Warren-Yuan} in 2009 for a constant right-hand side. The derivation using Ren-Wang's concavity inequality remains valid even when the right-hand side of the equation is non-constant; see also \cite{Lu-Tsai} and \cite[Lemma 1.4]{Tu}.
\end{remark}

\section{The curvature estimate} \label{k=2 proof}
In this section, we prove Theorem \ref{k=2}. According to Guan and Spruck, as they have commented in \cite{JEMS}, it suffices to derive a global curvature estimate for admissible graphs which would yield the desired \eqref{global C2}; everything else has been perfectly established in their paper \cite{JEMS}.

\begin{theorem}
Suppose $\Gamma=\partial \Omega$ is mean-convex and $\sigma \in (0,1)$. If $\Sigma=\{(x,u(x))\}$ is a $2$-convex graph with $u$ being a smooth solution of \eqref{the PDE} and \eqref{the approximate boundary condition}, then there exists some $C>0$ depending only on $n$, $\sigma$, and not on $\varepsilon$, such that
\[\max_{\Sigma} |H(\kappa)| \leq C \left(1+ \max_{\partial \Sigma}|H(\kappa)|\right),\] where $H(\kappa):=H_1(\kappa)=\frac{1}{n}\sum_{i=1}^{n} \kappa_i$ is the mean curvature of $\Sigma$.
\end{theorem}

\begin{proof}
Since $K_{2} \subseteq K_1=\{H>0\}$, it suffices to establish an upper bound. Also, in order to invoke the almost-Jacobi inequality \eqref{almost-Jacobi inequality} more conveniently, we will work with the equation
\[S_{2}(\kappa[\Sigma])=\binom{n}{2}\sigma^2.\] Consider the following test function:
\[Q= b - N \log \nu^{n+1},\] where $b=\log S_1$ and $N>0$ is a constant to be determined later. 

\begin{remark}
The mean-convexity assumption on $\Gamma$ will yield $\inf \nu^{n+1} \geq \sigma>0$; see \cite[Proposition 4.1]{JEMS}.
\end{remark}

\begin{remark}
A remarkable feature is that our choice of the parameter $N$ will be different than the ones in the literature. In \cite{Lu}, Lu assumed a large value for $N$; while in \cite{JEMS, SGAR}, Guan and Spruck used the term $\log (\nu^{n+1}-a)$ with $0<2a\leq \nu^{n+1}$ being small. Here, we are going to choose an intermediate value $N=3+2\sqrt{3} \approx 6.464$, which is neither large nor small, but it will gauge everything well. 
\end{remark}

Suppose that $Q$ attains its maximum at some interior $X_0 \in \Sigma$. Let $\{\tau_1,\ldots,\tau_n\}$ be a local orthonormal frame around $X_0$ such that the second fundamental form $h_{ij}=\kappa_i\delta_{ij}$ is diagonalized and the principal curvatures are ordered as
\[\kappa_1 \geq \kappa_2 \geq \cdots \geq \kappa_n.\] In what follows, we will carry out all calculations at the point $X_0$ without explicitly indicating so in every step. For notational convenience, we shall write $v_{i}=\nabla_iv$ and $v_{ij}=\nabla_{ij}v$ for a smooth function $v$ and write $h_{ijk}=\nabla_kh_{ij}$, $h_{ijkl}=\nabla_{lk}h_{ij}$, etc.
Thus, at $X_0$, we have
\begin{align}
0&=b_i - N\frac{\nabla_i\nu^{n+1}}{\nu^{n+1}}, \label{k=2 1st critical}\\
0&\geq b_{ii} - \frac{N}{\nu^{n+1}}\nabla_{ii}\nu^{n+1} + N \frac{(\nabla_{i}\nu^{n+1})^2}{(\nu^{n+1})^2}. \label{k=2 2nd critical 1}
\end{align}
Contracting \eqref{k=2 2nd critical 1} with $F=S_2$, we have
\begin{equation}
0 \geq \sum_{i=1}^{n} F^{ii}b_{ii} - \frac{N}{\nu^{n+1}}\sum_{i=1}^{n} F^{ii}\nabla_{ii}\nu^{n+1} +N \sum_{i=1}^{n} F^{ii}\frac{(\nabla_i\nu^{n+1})^2}{(\nu^{n+1})^2}. \label{k=2 2nd critical 2}
\end{equation}
\begin{remark} \label{reason 1}
Note that, here we have kept the term 
\[N\frac{(\nabla_i\nu^{n+1})^2}{(\nu^{n+1})^2},\] which was plausibly omitted in Lu's proof \cite{Lu}, because it would hardly be of any help. We will soon demonstrate how to utilize its presence below.
\end{remark}

We can calculate \cite[Lemma 4.3]{SGAR}
\begin{align*}
&\ \sum_{i=1}^{n} F^{ii}\nabla_{ii}\nu^{n+1} \\
=&\ 2\sum_{i=1}^{n} F^{ii}\frac{u_i}{u}\nabla_{i} \nu^{n+1}+2F[1+(\nu^{n+1})^2]-\nu^{n+1}\left(\sum_{i=1}^{n} F^{ii} + \sum_{i=1}^{n} F^{ii} \kappa_{i}^2\right).
\end{align*} With this and the almost-Jacobi inequality \eqref{almost-Jacobi inequality}, the second order critical condition \eqref{k=2 2nd critical 2} becomes
\begin{gather} \label{k=2 2nd critical 3}
\begin{split}
0 & \geq \sum_{i=1}^{n} F^{ii}b_{ii} - \frac{N}{\nu^{n+1}}\sum_{i=1}^{n} F^{ii}\nabla_{ii}\nu^{n+1} +N \sum_{i=1}^{n} F^{ii}\frac{(\nabla_i\nu^{n+1})^2}{(\nu^{n+1})^2}\\
&\geq  \left[\alpha_n\left(\beta_n+\frac{\kappa_n}{S_1}\right)\sum_{i=1}^{n} F^{ii}b_{i}^2 -\left(\sum_{i=1}^{n} F^{ii}+\sum_{i=1}^{n} F^{ii}\kappa_{i}^2\right)+2FS_1\right]\\
&\quad - \frac{N}{\nu^{n+1}}\left[2\sum_{i=1}^{n} F^{ii}\frac{u_i}{u}\nabla_i\nu^{n+1}-\nu^{n+1}\left(\sum_{i=1}^{n} F^{ii}+\sum_{i=1}^{n} F^{ii}\kappa_{i}^2\right)\right]\\
&\quad +N \sum_{i=1}^{n} F^{ii}\frac{(\nabla_i\nu^{n+1})^2}{(\nu^{n+1})^2}-2NF\frac{[1+(\nu^{n+1})^2]}{\nu^{n+1}} - \frac{C(n,\sigma)}{S_1}\\
&\geq  \alpha_n \left(\beta_n+\frac{\kappa_n}{S_1}\right) \sum_{i=1}^{n} F^{ii}b_{i}^2 + (N-1)\left(\sum_{i=1}^{n} F^{ii} + \sum_{i=1}^{n} F^{ii}\kappa_{i}^2\right)\\
&\quad + N \sum_{i=1}^{n} F^{ii}\frac{(\nabla_i\nu^{n+1})^2}{(\nu^{n+1})^2}-2N\sum_{i=1}^{n} F^{ii}\frac{u_i}{u}\frac{\nabla_i\nu^{n+1}}{\nu^{n+1}} +FS_{1}-CN
\end{split}
\end{gather} for some $C>0$ depending on $n$ and $\sigma$, where we have also assumed that $S_1$ is sufficiently large so that
\[FS_1 \geq \frac{C(n,\sigma)}{S_1}\] to get the third inequality. 

Now, one major difficulty is that, due to a sharp estimate of Shankar-Yuan \cite[Lemma 2.1]{SY}, the coefficient
\[\alpha_n\left(\beta_n+\frac{\kappa_n}{S_1}\right)\] cannot stay positive when $n \geq 5$. Note that the situation would have become much worse if that coefficient was indeed negative. In order to resolve this issue, we utilize the extra term that is mentioned in Remark \ref{reason 1} along with some novel arguments and claim the following.

\begin{claim} \label{claim 1}
By choosing an appropriate value for $N$, we can have that
\[\kappa_n (X_0)\geq -C\] for some $C>0$ depending on $n, N$ and $\sigma$.
\end{claim}
\begin{proof}[Proof of Claim \ref{claim 1}]
Note that if $\kappa_n \geq 0$, then the claim is trivial. Assume $\kappa_n <0$, by using the first order critical equation \eqref{k=2 1st critical} and Lemma \ref{sharp 1}, we have that
\begin{align*}
&\ \alpha_n\left(\beta_n+\frac{\kappa_n}{S_1}\right)\sum_{i=1}^{n} F^{ii}b_{i}^2 +N \sum_{i=1}^{n} F^{ii}\frac{(\nabla_i\nu^{n+1})^2}{(\nu^{n+1})^2}\\
=&\ \alpha_n\left(\beta_n+\frac{\kappa_n}{S_1}\right)N^2\sum_{i=1}^{n} F^{ii}\frac{(\nabla_{i}\nu^{n+1})^2}{(\nu^{n+1})^2}+N\sum_{i=1}^{n} F^{ii}\frac{(\nabla_i \nu^{n+1})^2}{(\nu^{n+1})^2}\\
\geq &\ \alpha_n\left(\beta_n-\frac{n-2}{n}\right)N^2\sum_{i=1}^{n} F^{ii}\frac{(\nabla_{i}\nu^{n+1})^2}{(\nu^{n+1})^2}+N\sum_{i=1}^{n} F^{ii}\frac{(\nabla_i \nu^{n+1})^2}{(\nu^{n+1})^2}\\
=&\ \left[\alpha_n\left(\beta_n-\frac{n-2}{n}\right)N+1\right]N\sum_{i=1}^{n} F^{ii}\frac{(\nabla_i\nu^{n+1})^2}{(\nu^{n+1})^2}
\end{align*} Now, recall the values
\[\alpha_n=\frac{\sqrt{3n^2+1}-(n+1)}{3(n-1)}, \quad \beta_n=\frac{\sqrt{3n^2+1}-(n-1)}{2n},\] it is elementary to verify that
\[\alpha_n\left(\beta_n-\frac{n-2}{n}\right) \geq -a_0\] for all $n>1$, where $a_0:=\frac{2}{\sqrt{3}}-1$. By choosing $N \leq \frac{1}{a_0}=3+2\sqrt{3}$, we have that
\[\alpha_n\left(\beta_n-\frac{n-2}{n}\right)N+1 \geq -a_0N+1 \geq 0\] and so the sum of these two terms is non-negative. 

\begin{remark}
If we had not kept the term mentioned in Remark \ref{reason 1}, we would not have obtained the non-negativity here.
\end{remark}

Next, using \cite[Lemma 4.1]{Sui-CVPDE},
\begin{equation}
\nabla_i\nu^{n+1}=\frac{u_i}{u}(\nu^{n+1}-\kappa_i), \quad \frac{u_{i}^2}{u^2} \leq \sum_{i=1}^{n} \frac{u_{i}^2}{u^2} \leq 1,\label{geometric formulas for APP}
\end{equation} we can perform a reduction technique that was exhibited in our previous work \cite{Bin-1}; see also \cite{Bin-3}. The key is to observe the utility of $\sum F^{ii}\kappa_{i}^2$, and we would have
\begin{align*}
&\ \frac{N-1}{2}\sum_{i=1}^{n} F^{ii}\kappa_{i}^2 - 2N \sum_{i=1}^{n} F^{ii}\frac{u_{i}}{u}\frac{\nabla_i \nu^{n+1}}{\nu^{n+1}} \\
=&\ \frac{N-1}{2}\sum_{i=1}^{n} F^{ii}\kappa_{i}^2 + 2N \sum_{i=1}^{n} F^{ii}\frac{u_{i}^2}{u^2}\frac{\kappa_i-\nu^{n+1}}{\nu^{n+1}}\\
\geq &\ \frac{N-1}{2}\sum_{i=1}^{n} F^{ii}\kappa_{i}^2 + 2N \sum_{\kappa_i<\nu^{n+1}} F^{ii}\frac{\kappa_i-\nu^{n+1}}{\nu^{n+1}}\\
\geq &\ \sum_{\kappa_i<\nu^{n+1}} F^{ii}\left[\frac{N-1}{2}\kappa_{i}^2+\frac{2N}{\nu^{n+1}}\kappa_i-2N\right]. \\
\end{align*} Since the quadratic polynomial
\[\frac{N-1}{2}\kappa_{i}^2+\frac{2N}{\nu^{n+1}}\kappa_i-2N\] is non-negative for 
\[\kappa_i \leq -\frac{\frac{2N}{\nu^{n+1}}+2\sqrt{\frac{1+(\nu^{n+1})^2}{(\nu^{n+1})^2}N^2-N}}{N-1}.\] For simplicity and the fact \cite[Proposition 4.1]{JEMS} that $\sigma \leq \nu^{n+1} \leq 1$, we may take 
\[\eta:=\frac{N}{N-1}\frac{2}{\sigma}(1+\sqrt{2})>\frac{\frac{2N}{\nu^{n+1}}+2\sqrt{\frac{1+(\nu^{n+1})^2}{(\nu^{n+1})^2}N^2-N}}{N-1}.\] From \eqref{k=2 2nd critical 3} and Lemma \ref{sharp 2}, we are then left with
\begin{gather} \label{trick 1}
\begin{split}
0 & \geq \frac{N-1}{2}\sum_{i=1}^{n} F^{ii}\kappa_{i}^2 + 2N\sum_{-\eta<\kappa_i<\nu^{n+1}} F^{ii}\frac{u_{i}^2}{u^2}\frac{\kappa_i-\nu^{n+1}}{\nu^{n+1}} -CN\\
&\geq \frac{N-1}{2} F^{nn} \kappa_{n}^2 - 2N \frac{\eta+1}{\sigma} \sum_{i=1}^{n} F^{ii} \\
&\geq C(n,N) \sum_{i=1}^{n} F^{ii} \cdot \kappa_{n}^2 - C(N,\eta,\sigma) \cdot \sum_{i=1}^{n} F^{ii}.
\end{split}
\end{gather} Finally, we divide \eqref{trick 1} by $\sum F^{ii}$ and rearranging, we obtain that
\begin{equation}
\kappa_{n}^2 \leq C \label{trick 2}
\end{equation} for some $C>0$ depending on $n,\sigma$ and $N$.

The claim is proved.
\begin{remark}
We learned of the dividing-$\sum F^{ii}$-and-rearranging trick from \cite{Chu-Jiao}. 
\end{remark}
\end{proof}

Now, back to \eqref{k=2 2nd critical 3} which we restate it here for convenience.
\begin{gather}\label{k=2 2nd critical 4}
\begin{split}
0&\geq  \alpha_n \left(\beta_n+\frac{\kappa_n}{S_1}\right) \sum_{i=1}^{n} F^{ii}b_{i}^2 + (N-1)\left(\sum_{i=1}^{n} F^{ii} + \sum_{i=1}^{n} F^{ii}\kappa_{i}^2\right)\\
&\quad + N \sum_{i=1}^{n} F^{ii}\frac{(\nabla_i\nu^{n+1})^2}{(\nu^{n+1})^2}-2N\sum_{i=1}^{n} F^{ii}\frac{u_i}{u}\frac{\nabla_i\nu^{n+1}}{\nu^{n+1}} +FS_{1}-CN.
\end{split}
\end{gather}
By assuming $S_1$ is sufficiently large and invoking Claim \ref{claim 1}, we have
\[\beta_n+\frac{\kappa_n}{S_1} \geq \beta_n-\frac{C}{S_1} \geq (1-\theta)\beta_n\] for some $0<\theta<1$ to be determined. 
\begin{claim} \label{claim 2}
For $N= \frac{1}{a_0}=3+2\sqrt{3}$, we have
\begin{align*}
&\ (N-1)\sum_{i=1}^{n} F^{ii}+\alpha_n \left(\beta_n+\frac{\kappa_n}{S_1}\right) \sum_{i=1}^{n} F^{ii}b_{i}^2 \\
+ &\ N \sum_{i=1}^{n} F^{ii}\frac{(\nabla_i\nu^{n+1})^2}{(\nu^{n+1})^2}-2N\sum_{i=1}^{n} F^{ii}\frac{u_i}{u}\frac{\nabla_i\nu^{n+1}}{\nu^{n+1}}\\
\geq 0.
\end{align*}
\end{claim}
\begin{proof}[Proof of Claim \ref{claim 2}]
Using the first order critical condition \eqref{k=2 1st critical} and the formulas \eqref{geometric formulas for APP}, we first derive
\begin{align*}
&\ \alpha_n \left(\beta_n+\frac{\kappa_n}{S_1}\right) \sum_{i=1}^{n} F^{ii}b_{i}^2+N \sum_{i=1}^{n} F^{ii} \frac{(\nabla_i\nu^{n+1})^2}{(\nu^{n+1})^2}-2N\sum_{i=1}^{n} F^{ii}\frac{u_i}{u}\frac{\nabla_i\nu^{n+1}}{\nu^{n+1}}\\
\geq &\ (1-\theta)\alpha_n\beta_n N^2\sum_{i=1}^{n} F^{ii}\frac{(\nabla_i \nu^{n+1})^2}{(\nu^{n+1})^2} +N \sum_{i=1}^{n} F^{ii} \frac{(\nabla_i\nu^{n+1})^2}{(\nu^{n+1})^2}\\
&\quad -2N\sum_{i=1}^{n} F^{ii}\frac{u_i}{u}\frac{\nabla_i\nu^{n+1}}{\nu^{n+1}}\\
=&\ \left[(1-\theta)\alpha_n\beta_n N^2+N\right]\sum_{i=1}^{n} F^{ii}\frac{u_{i}^2}{u^2}\left(\frac{\kappa_i-\nu^{n+1}}{\nu^{n+1}}\right)^2  +2N\sum_{i=1}^{n} F^{ii}\frac{u_{i}^2}{u^2}\frac{\kappa_i-\nu^{n+1}}{\nu^{n+1}}
\end{align*} whose summand is a quadratic polynomial in 
\[t=\frac{\kappa_i-\nu^{n+1}}{\nu^{n+1}}.\]
Note that, when $A>0$, in order to have $At^2+Bt>-c_0$, we need the discriminant be strictly negative, that is,
\[c_0>\frac{B^2}{4A}.\] In our case, we have that
\[\frac{B^2}{4A}=\frac{4N^2}{4[(1-\theta)\alpha_n\beta_nN^2+N]}=\frac{1}{(1-\theta)\alpha_n\beta_n+a_0}.\]

Now, we want that
\begin{equation}
(N-1)\sum_{i=1}^{n} F^{ii} -c_0 \sum_{i=1}^{n} F^{ii} \geq 0. \label{the bad term}
\end{equation} In other words, the constant $c_0$ must satisfy
\[N-1 \geq c_0 > \frac{1}{(1-\theta)\alpha_n\beta_n+a_0}.\] For this to happen, we just need to ensure that
\[N-1>\frac{1}{(1-\theta)\alpha_n\beta_n+a_0}.\] For simplicity, take some $\varepsilon>0$ to be determined later, and for this $\varepsilon>0$, there exists some $\theta>0$ such that
\[\frac{1}{(1-\theta)\alpha_n\beta_n+a_0}<\frac{1}{\alpha_n\beta_n+a_0}+\varepsilon.\] Furthermore, for notational convenience, we take 
\[\varepsilon=\frac{\delta}{\alpha_n\beta_n+a_0}\] for another $\delta>0$ to be determined later, so that the task is reduced to proving
\[N-1>\frac{1+\delta}{\alpha_n\beta_n+a_0},\] or equivalently,
\[\alpha_n\beta_n>\frac{1+\delta}{N-1}-a_0=\frac{1+\delta}{N-1}-\frac{1}{N}=\frac{\delta N+1}{N(N-1)}\] for some $\delta>0$. Indeed, one choice would be to take $\delta=\frac{1}{N}=a_0$ so that $\delta N =1$, and it can be readily verified that the following strict inequality
\[\alpha_n\beta_n>\frac{2}{N(N-1)} \approx 0.0566\]
holds for all $n>1$ and the claim is proved.

\begin{remark}
The appearance of $a_0$ in the denominator is crucial; this justifies again the necessity of keeping the term
\[N \frac{(\nabla_i\nu^{n+1})^2}{(\nu^{n+1})^2}.\]
\end{remark}

\end{proof}
Hence, by \eqref{k=2 2nd critical 4}, Claim \ref{claim 1} and Claim \ref{claim 2}, we are left with
\[0 \geq FS_1 - CN\] and the desired estimate follows.

The proof is now complete.
\end{proof}

\begin{remark} \label{end remark}
The act of keeping the term
\[N\frac{(\nabla_i\nu^{n+1})^2}{(\nu^{n+1})^2},\] and establishing
Claim \ref{claim 1} and Claim \ref{claim 2} constitute the most crucial ingredients of the proof. They are novel arguments which do not seem to have appeared in the literature, and are exactly our most original and genuine contributions; the prototype of these ideas first appeared in our previous investigation \cite{Bin-1}; see also \cite{Bin-3}.
We do hope our new method can inspire more techniques in the field.
\end{remark}

\section*{Acknowledgements}
The author would like to thank Professor Siyuan Lu for fruitful discussions on the subject and for pointing out several lethal mistakes in the initial attempts. Moreover, the paper \cite{SY} of Shankar-Yuan was brought up to the author's attention by Lu and it was Lu who told the author that one may try to apply their almost-Jacobi inequality to obtain Theorem \ref{k=2} in dimension four. This work is part of the author's Ph.D. thesis at The Chinese University of Hong Kong. The author is greatly indebted to Professor Man-Chun Lee and Professor Xiaolu Tan for all the help and encouragement. The author would also like to thank Professor Guohuan Qiu for kind comments on this work and for sharing insights about the Jacobi inequality.

\bibliography{refs}

\end{document}